\newtheorem{theorem}{Theorem}
\newtheorem{prop}{Proposition}
\newtheorem{lemma}{Lemma}
\newtheorem{exmp}{Example}
\newtheorem{cor}{Corollary}
\newtheorem*{prob}{Problem}
\begin{document}
\author{Mark Pankov}
\title{Automorphisms of infinite Johnson graph}
\subjclass[2000]{05C63, 05C50}
\address{Department of Mathematics and Informatics, University of Warmia and Mazury,
{\. Z}olnierska 14A, 10-561 Olsztyn, Poland}
\email{markpankov@gmail.com, pankov@matman.uwm.edu.pl}

\maketitle

\begin{abstract}
We consider the {\it infinite Johnson graph} $J_{\infty}$ whose vertex set consists of all subsets
$X\subset {\mathbb N}$
satisfying $|X|=|{\mathbb N}\setminus X|=\infty$ and whose edges are pairs of such subsets $X,Y$
satisfying $|X\setminus Y|=|Y\setminus X|=1$. An automorphism of $J_{\infty}$ is said to be
{\it regular}
if it is induced by a permutation on $\mathbb{N}$ or
it is the composition of the automorphism induced by a permutation on $\mathbb{N}$
and the automorphism $X\to {\mathbb N}\setminus X$. The graph $J_{\infty}$ admits non-regular automorphisms.
Our first result  states that
the restriction of every automorphism of $J_{\infty}$ to any
connected component ($J_{\infty}$ is not connected) coincides with the restriction of a regular automorphism.
The second result  is
a characterization of regular automorphisms of $J_{\infty}$ as
order preserving and order reversing bijective transformations of the vertex set of $J_{\infty}$
(the vertex set is partially ordered by the inclusion relation).
As an application, we describe automorphisms of the  associated {\it infinite Kneser graph}.
\end{abstract}

\section{Introduction}

\subsection{Classical Grassmann and Johnson graphs}
Let $V$ be an $n$-dimensional vector space (over a division ring)
and $n<\infty$.
The {\it Grassmann graph} $\Gamma_{k}(V)$ is the graph
whose vertex set is the Grassmannian ${\mathcal G}_{k}(V)$ formed by all $k$-dimensional subspaces of $V$
and whose edges are pairs of $k$-dimensional subspaces with $(k-1)$-dimensional intersections
(in what follows, two vertices of a graph joined by an edge will be called {\it adjacent}).
The graph $\Gamma_{k}(V)$ is connected.
By duality, $\Gamma_{k}(V)$ is isomorphic to $\Gamma_{n-k}(V^{*})$ ($V^{*}$ is the vector space dual to $V$).
Classical Chow's theorem \cite{Chow} states that every automorphism of $\Gamma_{k}(V)$, $1<k<n-1$,
is induced by a semilinear automorphism of $V$ or a semilinear isomorphism of $V$ to $V^{*}$;
the second possibility can be realized only in the case when $n=2k$.
If $k=1,n-1$ then any two distinct vertices of $\Gamma_{k}(V)$ are adjacent
and any bijective transformation of the vertex set is an automorphism of $\Gamma_{k}(V)$.
We refer \cite{Pankov1} for more information concerning Grassmann graphs.

The {\it Johnson graph} $J(n,k)$ is formed by all $k$-element subsets of $\{1,\dots,n\}$,
two such subsets are adjacent if their intersection consists of $k-1$ elements.
This graph admits a natural isometric embedding in $\Gamma_{k}(V)$.
Consider a base $B$ of the vector space $V$ and the subset of ${\mathcal G}_{k}(V)$
formed by all $k$-dimensional subspaces spanned by subsets of $B$.
Subsets of such type  are called {\it apartments} of ${\mathcal G}_{k}(V)$
(see \cite{Pankov1} for motivations of this term). Every apartment of ${\mathcal G}_{k}(V)$
is the image of an isometric embedding of $J(n,k)$ in $\Gamma_{k}(V)$.
However, the image of every isometric embedding of $J(n,k)$ in $\Gamma_{k}(V)$ is an apartment of ${\mathcal G}_{k}(V)$
only in the case when $n=2k$. This follows from the classification of isometric embeddings of
Johnson graphs $J(l,m)$ in $\Gamma_{k}(V)$ \cite{Pankov2}.
The graphs $J(n,k)$ and $J(n,n-k)$ are isomorphic:
the mapping $*$ transferring every subset $X\subset \{1,\dots,n\}$ to the complement
$\{1,\dots,n\}\setminus X$ defines an isomorphism between these graphs
(in the case when $n=2k$, this is an automorphism of $J(n,k)$).
It is not difficult to prove that every automorphism of $J(n,k)$
is induced by a permutation on $\{1,\dots,n\}$ or $n=2k$ and it is the composition of
the automorphism $*$
and the automorphism induced by a permutation on $\{1,\dots,n\}$
(an analog of Chow's theorem).

So, we can say that $J(n,k)$ is a "thin prototype" of $\Gamma_{k}(V)$.
Different characterizations of Grassmann and Johnson graphs can be found in \cite{Char1,Char2,Char3}, see also Sections 9.1 and 9.3 in \cite{BCN}.

\subsection{Grassmann graphs of infinite-dimensional vector spaces}
Now, suppose that $V$ is a vector space of infinite dimension $\aleph_{0}$.
Grassmannians of $V$ can be defined as the orbits of the action of the linear group ${\rm GL}(V)$ on
the set of all proper subspaces of $V$.
By \cite{Pankov1}, there are the following three types of Grassmannians:
\begin{enumerate}
\item[$\bullet$] ${\mathcal G}_{k}(V)$ formed by all subspaces of dimension $k\in {\mathbb N}$,
\item[$\bullet$] ${\mathcal G}^{k}(V)$ formed by all subspaces of codimension $k\in {\mathbb N}$,
\item[$\bullet$] ${\mathcal G}_{\infty}(V)$ formed by all subspaces of infinite dimension and codimension.
\end{enumerate}
Let ${\mathcal G}$ be one of these Grassmannians.
We say that $S,U\in {\mathcal G}$ are {\it adjacent} if
$$\dim(S/(S\cap U))=\dim(U/(S\cap U))=1.$$
The associated {\it Grassmann graph}, it will be denoted by
$\Gamma_{k}(V)$, $\Gamma^{k}(V)$ or $\Gamma_{\infty}(V)$ (respectively),
is the graph whose vertex set is ${\mathcal G}$ and whose edges are pairs of adjacent elements.

The graph $\Gamma_{k}(V)$ is connected and every automorphism of $\Gamma_{k}(V)$
is induced by a semilinear automorphism of $V$ \cite{Pankov1}.
By duality (see, for example, \cite{Baer,Pankov1}), $\Gamma^{k}(V)$ is canonically isomorphic to $\Gamma_{k}(V^{*})$.
Thus $\Gamma^{k}(V)$ is connected and every automorphism of $\Gamma^{k}(V)$
is induced by a semilinear automorphism of $V^{*}$.
The graph $\Gamma_{\infty}(V)$ is not connected. It admits automorphisms whose restrictions to distinct connected components
are induced by distinct semilinear isomorphisms \cite{BH}.
There is the following open problem \cite{Havlicek}.
\begin{prob}
Describe the restrictions of automorphisms of $\Gamma_{\infty}(V)$ to connected components.
\end{prob}
The idea used to prove Chow's theorem can not be exploited by many reasons, for example,
by the fact that the vector spaces $V$ and $V^{*}$ are non-isomorphic ($\dim V<\dim V^{*}$) \cite{Baer}.

\subsection{}
In this paper, a weak version of this problem will be solved.
We consider the {\it infinite Johnson graph} $J_{\infty}$ --- a thin prototype of $\Gamma_{\infty}(V)$.
The vertex set of $J_{\infty}$ is formed by all subsets $X\subset {\mathbb N}$
satisfying $|X|=|{\mathbb N}\setminus X|=\infty$, two such subsets $X,Y$ are adjacent if
$$|X\setminus Y|=|Y\setminus X|=1.$$
There is a natural isometric embedding of $J_{\infty}$ in $\Gamma_{\infty}(V)$:
for any infinite linearly independent subset $B\subset V$
consider the restriction of the graph $\Gamma_{\infty}(V)$ to the set formed by all elements of ${\mathcal G}_{\infty}(V)$
spanned by subsets of $B$.

An automorphism of $J_{\infty}$ will be called {\it regular} if
it is induced by a permutation on $\mathbb{N}$ or
it is the composition of the automorphism induced by a permutation on $\mathbb{N}$
and the automorphism $X\to {\mathbb N}\setminus X$.
The graph $J_{\infty}$ is not connected and admits non-regular automorphisms
(a simple modification of the example from \cite{BH}).
Our first result (Theorem 1) states that the restriction of every automorphism of $J_{\infty}$ to any
connected component of $J_{\infty}$ coincides with the restriction of a regular automorphism.
The vertex set of $J_{\infty}$ is partially ordered by the inclusion relation.
The second result (Theorem 2 and Corollary 1) is a characterization of regular automorphisms of $J_{\infty}$ as
order preserving and order reversing bijective transformations of the vertex set.
As an application of Theorem 2, we show that every automorphism of the associated {\it infinite Kneser graph} $K_{\infty}$
is induced by a permutation on $\mathbb{N}$.

Some general information concerning automorphisms of graphs can be found in \cite{Cam}.

\section{Infinite Johnson graphs}

\subsection{Definition}
Our definition of infinite Johnson graphs is similar to the definition of Grassmann graphs of
infinite-dimensional vector spaces given in the previous section.

Denote by ${\rm S}_{\infty}$ the group of all permutations on ${\mathbb N}$
and consider the action of this group on the set of all proper subsets of $\mathbb{N}$.
The associated orbits are of the following three types:
\begin{enumerate}
\item[(1)]
the set consisting of all $X\subset {\mathbb N}$ such that $|X|=k$
($k$ is a given natural number),
\item[(2)] the set consisting of all $X\subset {\mathbb N}$ such that $|{\mathbb N}\setminus X|=k$
(as in the previous case, $k$ is a given natural number),
\item[(3)] the set consisting of all $X\subset {\mathbb N}$ such that $|X|=|\mathbb{N}\setminus X|=\infty$.
\end{enumerate}
Let $J$ be one of these orbits.
We say that $X,Y\in J$ are {\it adjacent} if
$$|X\setminus Y|=|Y\setminus X|=1$$
(in the case (1), this condition is equivalent to the equality $|X\cap Y|=k-1$).
The associated {\it Johnson graph}, we will denote it by
$J_{k}$, $J^{k}$ or $J_{\infty}$ (respectively),
is the graph whose vertex set is $J$ and whose edges are pairs of adjacent elements.

\subsection{Some remarks on $J_{k}$ and $J^{k}$}
The mapping $*$ transferring every subset $X\subset {\mathbb N}$ to the complement ${\mathbb N}\setminus X$
defines an isomorphism between $J^{k}$ and $J_k$.
The structure of $J_{k}$ is rather similar to the structure of finite Johnson graphs.
This graph is connected. The distance between $X,Y\in J_k$ is equal to $|X\setminus Y|=|Y\setminus X|$
and the diameter of $J_k$ is $k$.
Maximal cliques of $J_k$ are the following two types:
\begin{enumerate}
\item[$\bullet$] the {\it star} $St(A)$, $A\in J_{k-1}$, consisting of all vertices of $J_{k}$ containing $A$,
\item[$\bullet$] the {\it top} $T(B)$,  $B\in J_{k+1}$, consisting of all vertices of $J_{k}$ contained in $B$.
\end{enumerate}
Every automorphism $f$ of $J_k$ preserves the class of maximal cliques (stars and tops).
Every top consists of precisely $k+1$ vertices and every star contains an infinite number of vertices;
this means that stars go to stars and tops go to tops.
In particular, $f$ induces a bijective transformation of the vertex set of $J_{k-1}$.
This transformation is an automorphism of $J_{k-1}$, since
two stars in $J_k$ have a non-zero intersection (consisting of precisely one vertex) if and only if
the associated vertices of $J_{k-1}$ are adjacent.
So, $f$ induces an automorphism of $J_{k-1}$. Step by step, we come to a permutation on $\mathbb{N}$
(an automorphism of $J_1$). This permutation induces $f$.
Now, suppose that $g$ is an automorphism of $J^k$. Then $h=*g*$ is an automorphism of $J_k$.
Hence $h$ is induced by a permutation $s\in {\rm S}_{\infty}$
and an easy verification shows that $g=*h*$ also is induced by $s$.
Therefore, {\it all automorphisms of the Johnson graphs $J_k$ and $J^{k}$ are induced by permutations on $\mathbb{N}$}.

\subsection{Basic properties of $J_{\infty}$}
The graph $J_{\infty}$ is not connected.
The connected component containing $X\in J_{\infty}$ will be denoted by $J(X)$;
it consists of all $Y\in J_{\infty}$ satisfying
$$|X\setminus Y|=|Y\setminus X|<\infty.$$
Any two connected components of $J_{\infty}$ are isomorphic
(every permutation on $\mathbb{N}$ induces an automorphism of $J_{\infty}$,
we consider a permutation transferring $X\in J_{\infty}$ to $Y\in J_{\infty}$,
the associated automorphism of $J_{\infty}$ sends $J(X)$ to $J(Y)$).

The graph $J_{\infty}$ contains an infinite number of connected components.
If $X\in J_{\infty}$ and $A$ is a finite subset of $X$ then $X\setminus A$ is a vertex of $J_{\infty}$
which does not belong to $J(X)$.
So, $X,Y\in J_{\infty}$ belong to distinct connected components if they are incident subsets of $\mathbb{N}$
($X\subset Y$ or $Y\subset X$).

Let $X\in J_{\infty}$. The {\it star} $St(X)$ consists of all $Y\in J_{\infty}$ containing $X$
and satisfying $|Y\setminus X|=1$.
Similarly, the {\it top} $T(X)$ is formed by all $Y\in J_{\infty}$ contained in $X$
and such that $|X\setminus Y|=1$.
Clearly, $St(X)$ and $T(X)$ both are maximal cliques of $J_{\infty}$
and it is easy to see that
{\it every maximal clique of $J_{\infty}$ is a star or a top}.

The automorphisms of $J_{\infty}$ induced by permutations on $\mathbb{N}$
map stars to stars and tops to tops.
The automorphism $*$ (sending every $X\in J_{\infty}$ to ${\mathbb N}\setminus X$)
transfers stars to tops and tops to stars.

\section{Automorphisms of $J_{\infty}$}

\subsection{Main results}
Recall that an automorphism of $J_{\infty}$ is {\it regular}
if it is induced by a permutation on $\mathbb{N}$ or it is the composition of the automorphism $*$
and the automorphism induced by a permutation on $\mathbb{N}$.
Note that $*f=f*$ for every automorphism $f$ of $J_{\infty}$ induced by a permutation on $\mathbb{N}$.

The vertex set of $J_{\infty}$ is partially ordered by the inclusion relation.
We say that a bijective transformation $f$ of the vertex set  is {\it order preserving}
or {\it order reversing} if it satisfies the condition
$$X\subset Y\;\Longleftrightarrow\; f(X)\subset f(Y)\;\;\;\;\;\forall\;X,Y\in J_{\infty}$$
or the condition
$$X\subset Y\;\Longleftrightarrow\; f(Y)\subset f(X)\;\;\;\;\;\forall\;X,Y\in J_{\infty},$$
respectively.
Every automorphism of $J_{\infty}$ induced by a permutation on $\mathbb{N}$ is order preserving.
The automorphism $*$ is order reversing. Therefore, every regular automorphism of $J_{\infty}$ is order preserving or order reversing;
in particular, all regular automorphisms of $J_{\infty}$ preserve the incidence relation.

Now we modify the example from \cite{BH} mentioned above
and establish the existence of non-regular automorphisms of $J_{\infty}$.

\begin{exmp}{\rm
Let $A\in J_{\infty}$ and $B$ be a vertex of the connected component $J(A)$.
We take any permutation $s\in {\rm S}_{\infty}$ sending $A$ to $B$.
This permutation preserves $J(A)$ and we define
$$f(X):=
\begin{cases}
s(X)&X\in J(A)\\
\;X&X\in J_{\infty}\setminus J(A).
\end{cases}$$
This is an automorphism of $J_{\infty}$.
We choose $Y\in J_{\infty}$ which is a proper subset of $A$ non-incident with $B$.
It is clear that $Y\not\in J(A)$, thus $f(Y)=Y$.
This means that $f$ does not preserve the incidence relation
($A$ and $Y$ are incident, but $f(A)=B$ and $f(Y)=Y$ are non-incident).
Therefore, the automorphism $f$ is non-regular.
}\end{exmp}

Our main result is the following.

\begin{theorem}\label{theorem1}
The restriction of every automorphism of $J_{\infty}$ to any connected component of $J_{\infty}$
coincides with the restriction of a regular automorphism to this connected component.
\end{theorem}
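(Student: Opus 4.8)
The plan is to read off, from the purely combinatorial behaviour of $f$ on maximal cliques, enough data to reconstruct a permutation of $\mathbb{N}$ realizing $f$ on a given connected component $J(Z)$. The decisive first point is a combinatorial criterion separating stars from tops inside $J_{\infty}$, since here---unlike in $J_{k}$---both kinds of maximal clique are infinite and cannot be distinguished by cardinality. I would compute how two maximal cliques intersect: two distinct stars, or two distinct tops, meet in at most one vertex (and in exactly one precisely when their indices are adjacent), whereas a star $St(X)$ and a top $T(X')$ meet in either no vertex or exactly two vertices (the latter exactly when $X\subset X'$ with $|X'\setminus X|=2$). Since every automorphism permutes maximal cliques and preserves the cardinality of their intersections, $f$ preserves the relation ``meet in exactly two vertices'', which holds for a pair of maximal cliques precisely when one is a star and the other a top.

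Next I would fix the type behaviour of $f$ on one component. Consider the graph whose vertices are the maximal cliques contained in $J(Z)$ and whose edges join cliques sharing at least one vertex; connectivity of $J(Z)$ yields connectivity of this clique graph (any two vertices of $J(Z)$ are joined by a path, and consecutive vertices lie in a common clique, while all cliques through a fixed vertex are mutually adjacent). Writing $t(C)\in\{0,1\}$ for the type of a clique, the dichotomy above says that the parity $t(C_{1})+t(C_{2})$ along an edge is determined by whether the two cliques share one or two vertices; as $f$ preserves this intersection datum, $t(f(C))-t(C)$ is constant on the connected clique graph. Hence $f$ restricted to $J(Z)$ is either \emph{type-preserving} (stars to stars, tops to tops) or \emph{type-reversing}. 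Because $*$ is a regular automorphism interchanging all stars and tops, composing $f$ with $*$ when necessary lets me assume $f$ is type-preserving on $J(Z)$; it then suffices to produce a permutation $s\in{\rm S}_{\infty}$ inducing $f$ there, since $f$ (or $*f$) will then agree with a regular automorphism on $J(Z)$.

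To extract $s$ I would exploit a local structure: every edge $\{Y,Y'\}$ of $J_{\infty}$ lies in exactly one star $St(Y\cap Y')$ and exactly one top $T(Y\cup Y')$, and the stars containing a fixed vertex $Y$ are naturally indexed by the elements of $Y$ (the star $St(Y\setminus\{p\})$ corresponds to $p\in Y$), while the tops through $Y$ are indexed by $\mathbb{N}\setminus Y$. A type-preserving $f$ therefore induces, for each vertex $Y$, a bijection $\phi_{Y}\colon Y\to f(Y)$ from its action on stars and a companion bijection $\mathbb{N}\setminus Y\to\mathbb{N}\setminus f(Y)$ from its action on tops. The core of the argument is then a gluing step: using adjacency of vertices together with the two-vertex star--top incidence to transport information between neighbouring vertices, I would check that these local bijections agree on overlaps, so that they assemble into a single map $s\colon\mathbb{N}\to\mathbb{N}$; bijectivity and the fact that $s$ induces $f$ on $J(Z)$ would then follow by the same local bookkeeping.

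The main obstacle I anticipate is precisely this last gluing: verifying that the element-level bijections $\phi_{Y}$ read off at different vertices are mutually consistent, that the ``present-side'' data coming from stars and the ``absent-side'' data coming from tops define one and the same permutation of all of $\mathbb{N}$, and that the resulting $s$ genuinely reproduces $f$ on the whole component. Since a single component only detects finite modifications of $Z$, care is needed to see that the locally determined correspondence on $\mathbb{N}$ is globally well defined and bijective, rather than yielding two unrelated partial maps on $Z$ and on $\mathbb{N}\setminus Z$; the two-vertex star--top incidence established in the first step is exactly what ties these two sides together.
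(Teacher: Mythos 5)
Your opening steps are sound and coincide with the paper's Lemma 1: the intersection counts (two distinct stars, or two distinct tops, meet in at most one vertex; a star and a top meet in $0$ or exactly $2$ vertices) are exactly the facts the paper uses, and your clique-graph connectivity argument for the constancy of the type shift is a legitimate repackaging of the paper's path argument. Likewise, reducing to the type-preserving case by composing with $*$ is how the paper handles case (B). Up to this point the proposal is correct and essentially the paper's route.

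The genuine gap is that everything after this --- the construction of the permutation $s$ and the proof that it induces $f$ on the whole component --- is announced rather than proved; you yourself flag the gluing as ``the main obstacle,'' but identifying the obstacle is not overcoming it, and this gluing is where the theorem actually lives (it occupies the paper's Proposition 1, i.e.\ Lemmas 2 and 3). Concretely, the paper fills this gap in two steps. First, after normalizing $f(X)=X$ by composing with a permutation, it shows the restriction of $f$ to $X^{\sim}$ is induced by a permutation $s_{X}$ of $\mathbb{N}$: the stars inside $X^{\sim}$ give a permutation of $X$, the value $s_{X}(n)$ for $n\notin X$ is read off as the unique element of $f(Y)\setminus X$ for a neighbour $Y\ni n$ of $X$, and well-definedness requires an argument (two neighbours $Y,Z$ containing the same $n$ have adjacent images whose intersections with $X$ must differ, since $X,Y,Z$ lie in a common top and hence in no common star). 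Second, for adjacent $X,Y$ it proves $s_{X}=s_{Y}$ elementwise by evaluating both maps on vertices of $X^{\sim}\cap Y^{\sim}=St(X\cap Y)\cup T(X\cup Y)$: vertices $\{k\}\cup(X\cap Y)$ of the star pin down agreement for $k\notin X\cap Y$, and vertices of the top pin it down for $k\in X\cap Y$; connectedness of the component then yields one permutation inducing $f$ everywhere. None of this verification appears in your proposal. For what it is worth, your two-sided indexing would make the first step slick: if $Y'=(Y\setminus\{p\})\cup\{q\}$ is adjacent to $Y$, then $f(Y')$ lies in $St\bigl(f(Y)\setminus\{\phi_{Y}(p)\}\bigr)\cap T\bigl(f(Y)\cup\{\psi_{Y}(q)\}\bigr)$, which forces $f(Y')=(f(Y)\setminus\{\phi_{Y}(p)\})\cup\{\psi_{Y}(q)\}$, so $s_{Y}=\phi_{Y}\cup\psi_{Y}$ induces $f$ on $Y^{\sim}$; but the consistency $s_{Y}=s_{Y'}$ for adjacent $Y,Y'$ and the propagation along the component would still have to be carried out, and without it the proposal does not constitute a proof.
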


The second result is a characterization of regular automorphisms.

\begin{theorem}\label{theorem2}
Every order preserving bijective transformation of the vertex set of $J_{\infty}$
is the automorphism of $J_{\infty}$ induced by a permutation on $\mathbb{N}$.
\end{theorem}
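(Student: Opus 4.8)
The plan is to show that an order preserving bijection $f$ reconstructs a single permutation of $\mathbb{N}$ out of purely local data attached to each vertex, and then to use the global inclusion relation to verify that these local data fit together coherently across the (infinitely many) connected components.

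First I would observe that $f$, being a bijection of the poset with $X\subset Y\iff f(X)\subset f(Y)$, preserves the covering relation in both directions. Since $Y$ covers $X$ exactly when $Y=X\cup\{n\}$ for some $n\notin X$ (if $|Y\setminus X|\ge 2$ one can insert an intermediate vertex $X\cup\{a\}$, which is again infinite and co-infinite), the star $St(X)$ is precisely the set of upper covers of $X$ and the top $T(X)$ the set of its lower covers; hence $f(St(X))=St(f(X))$ and $f(T(X))=T(f(X))$. For a fixed $X$ this yields two bijections $\phi_X\colon\mathbb{N}\setminus X\to\mathbb{N}\setminus f(X)$, where $\phi_X(n)$ is determined by $f(X\cup\{n\})=f(X)\cup\{\phi_X(n)\}$, and $\psi_X\colon X\to f(X)$, determined analogously from the lower covers. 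Together they define a permutation $\sigma_X\in{\rm S}_\infty$ agreeing with $\phi_X$ off $X$ and with $\psi_X$ on $X$.

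Next I would prove that $\sigma_X$ is constant on the component $J(X)$. The key computation is a commuting square: for distinct $n_1,n_2\notin X$ the four vertices $X,\,X\cup\{n_1\},\,X\cup\{n_2\},\,X\cup\{n_1,n_2\}$ form a four-element interval of the poset, so $f$ sends $X\cup\{n_1,n_2\}$ to $f(X)\cup\{\phi_X(n_1),\phi_X(n_2)\}$; reading this from the vertex $X\cup\{n_1\}$ gives $\phi_{X\cup\{n_1\}}(n_2)=\phi_X(n_2)$, while comparing lower covers shows $\psi_{X\cup\{n_1\}}(n_1)=\phi_X(n_1)$ and $\psi_{X\cup\{n_1\}}(n_2)=\psi_X(n_2)$ for $n_2\in X$. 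Thus passing to an upper or lower cover does not change $\sigma_X$, and since any two vertices of $J(X)$ are joined by a finite chain of single-element insertions and deletions through valid vertices, $\sigma_X=\sigma_Y$ for all $Y\in J(X)$; an induction along such a chain then gives $f(Y)=\sigma_X(Y)$ for every $Y\in J(X)$. (Alternatively one may note that $f$ preserves adjacency, since two distinct vertices are adjacent iff their tops meet, so $f$ is an automorphism of $J_\infty$ and Theorem 1 applies, with order preservation ruling out the order reversing alternative on each component.)

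The remaining, and genuinely delicate, step is to glue the component permutations together, and here the inclusion relation between vertices lying in distinct components is the essential extra ingredient not available at the level of the graph alone. Given two components with induced permutations $\sigma_1,\sigma_2$, I would choose representatives $X_1\subset X_2$ with $X_2\setminus X_1$ infinite (so that $X_1,X_2$ lie in the two components) and $n\notin X_2$. Applying $f$ to the pair of inclusions $X_1\subset X_2$ and $X_1\cup\{n\}\subset X_2\cup\{n\}$, and using that $f$ is induced by $\sigma_i$ on each component, forces $f(X_1)\cup\{\sigma_1(n)\}\subset f(X_2)\cup\{\sigma_2(n)\}$ together with $f(X_1)\subset f(X_2)$; hence $\sigma_1(n)\in f(X_2)\cup\{\sigma_2(n)\}$, so $\sigma_1(n)=\sigma_2(n)$ whenever $\sigma_1(n)\notin f(X_2)$. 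This already yields agreement off an exceptional set, and I expect the main work of the proof to lie in organising the choice of representatives (and the symmetric inclusion $X_2'\subset X_1'$) so as to eliminate the exceptional elements and obtain $\sigma_1=\sigma_2$ for every pair of components at once. Once all component permutations are identified with a single $\sigma\in{\rm S}_\infty$, we conclude that $f(Y)=\sigma(Y)$ for every vertex $Y$, which is exactly the assertion of the theorem.
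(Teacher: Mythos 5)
Your first two steps are sound: covering relations in the vertex poset are preserved in both directions, the upper and lower covers of $X$ are exactly $St(X)$ and $T(X)$, the commuting-square computation does give $\sigma_{X\cup\{n\}}=\sigma_{X}$ (and dually for deletions), and chains of single-element insertions and deletions then make $\sigma$ constant on each class of vertices at finite symmetric difference, in particular on each connected component, with $f(Y)=\sigma_{X}(Y)$ there. The genuine gap is the third step, the cross-component gluing, which you yourself flag as ``the main work'' and then do not carry out; but that step is exactly where the content of Theorem 2 lies, since Example 1 of the paper shows that componentwise data for a mere graph automorphism need not cohere at all. Concretely there are two problems. First, two arbitrary components need not contain comparable representatives: if $C_{1}$ is the component of the even numbers and $C_{2}$ that of the odd numbers, every vertex of $C_{1}$ contains infinitely many evens while every vertex of $C_{2}$ contains only finitely many, so no vertex of $C_{1}$ is contained in a vertex of $C_{2}$, nor conversely; you would have to pass through intermediate components and invoke transitivity of the agreement relation, and this is never said. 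Second, and more seriously, your inclusion argument yields $\sigma_{1}(n)=\sigma_{2}(n)$ only when $\sigma_{1}(n)\notin f(X_{2})$, i.e.\ off the set $\sigma_{1}^{-1}(\sigma_{2}(X_{2}))$, which is infinite; ``I expect the main work \dots to lie in organising the choice of representatives'' is a statement of hope, not an argument, so the decisive point of the proof is missing.

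For what it is worth, the gap is closable with your own tools. In the exceptional case write $\sigma_{1}(n)=\sigma_{2}(m)$ with $m\in X_{2}$, and apply order preservation to $(X_{1}\setminus\{m\})\cup\{n\}\subset (X_{2}\setminus\{m\})\cup\{n\}$: both vertices stay in the same finite-symmetric-difference classes, so their images are $\bigl(f(X_{1})\setminus\{\sigma_{1}(m)\}\bigr)\cup\{\sigma_{1}(n)\}$ and $\bigl(f(X_{2})\setminus\{\sigma_{1}(n)\}\bigr)\cup\{\sigma_{2}(n)\}$, and since $\sigma_{1}(n)$ lies in the former but not in $f(X_{2})\setminus\{\sigma_{1}(n)\}$, we get $\sigma_{1}(n)=\sigma_{2}(n)$; extending agreement from $\mathbb{N}\setminus X_{2}$ to all of $\mathbb{N}$ and handling incomparable components via chains then finishes the job --- but none of this is in your text. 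The paper's own route is different and bypasses componentwise permutations entirely: its key tool is the intersection lemma (Lemma 5), that $f$ commutes with arbitrary intersections of vertices whose intersection is again a vertex; from this it shows that $f$ agrees with a single bijection $s\colon X\to f(X)$ on \emph{all} vertices contained in $X$ (Lemma 7), and then, for each $n$, it produces $s(n)$ with $f([n])=[s(n)]$ by choosing $Y_{1}\cap Y_{2}=\{n\}$ and enlarging $X=Y_{1}\cup Y_{2}$ suitably when a vertex meets $X$ in a finite set (Lemma 8). So your outline is salvageable and genuinely different in mechanism, but as written it does not prove the theorem.
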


Observe that for every order reversing bijective transformation $f$ of the vertex set of $J_{\infty}$
the mapping $*f$ is order preserving.
Thus, as a direct consequence of Theorem \ref{theorem2}, we get the following characterization of regular automorphisms of $J_{\infty}$.

\begin{cor}
The group of all regular automorphisms of $J_{\infty}$ coincides with
the group formed by all order preserving and order reversing bijective transformations of the vertex set of $J_{\infty}$.
\end{cor}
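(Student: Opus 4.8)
The plan is to deduce Theorem 2 from Theorem 1 by first turning an order isomorphism into a graph automorphism and then gluing the componentwise pictures together through inclusions that cross between connected components. First I would note that the covering relation of the poset is order-definable: $X\lessdot Y$ precisely when $Y=X\cup\{a\}$ for a single element $a$, so both $f$ and $f^{-1}$ preserve it. Adjacency is order-definable as well, since $X,Y$ are adjacent exactly when they are incomparable and admit both a common lower cover and a common upper cover (equivalently, the interval $[X\cap Y,X\cup Y]$ is a four-element diamond). Hence $f$ preserves adjacency and is an automorphism of $J_{\infty}$, and Theorem 1 supplies, for every component $J(X)$, a regular automorphism agreeing with $f$ on $J(X)$.

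Next I would rule out the order-reversing ($\ast$-type) alternative on each component. Fix adjacent $Y_1,Y_2$ in one component and set $U=Y_1\cup Y_2$, which lies in a different component and covers both $Y_i$. Writing $f(Y_1)=M\cup\{\alpha\}$ and $f(Y_2)=M\cup\{\beta\}$ for their common lower cover $M=f(Y_1)\cap f(Y_2)$, the only common upper cover of $f(Y_1),f(Y_2)$ is $M\cup\{\alpha,\beta\}=f(Y_1)\cup f(Y_2)$; since $f$ preserves covers, $f(U)$ must equal this set. But if $f=\ast\circ s$ on this component, then $f(Y_1)\cup f(Y_2)=\mathbb{N}\setminus\big(s(Y_1)\cap s(Y_2)\big)=\mathbb{N}\setminus s(Y_1\cap Y_2)=f(Y_1\cap Y_2)$, forcing $f(U)=f(Y_1\cap Y_2)$ and hence $U=Y_1\cap Y_2$ by injectivity, which is absurd. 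So on every component $f$ agrees with a permutation-induced (order-preserving) automorphism.

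Composing $f$ with the inverse of the permutation it realizes on one component $J(X_0)$, I may assume $f$ fixes $J(X_0)$ pointwise and must only prove $f=\mathrm{id}$. The engine is a \emph{sandwich principle}: if $F$ denotes the fixed set of $f$ and a vertex $W$ satisfies $W=\bigcup\{V\in F:V\subseteq W\}$, then order preservation gives $f(W)\supseteq W$, and applying the same to the (equally $F$-fixing, order-preserving) map $f^{-1}$ gives $f(W)\subseteq W$, so $W\in F$; the dual statement holds for intersections of fixed sets lying above $W$. Starting from $F\supseteq J(X_0)$, a balanced-swap construction shows that one of the two criteria applies to every $W$ with $|W\triangle X_0|<\infty$, so $F$ contains the whole almost-equality class of $X_0$. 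Feeding this back in, the union criterion then captures every $W$ with $X_0\subseteq^{\ast}W$ and the intersection criterion every $W$ with $W\subseteq^{\ast}X_0$; with that enlarged $F$ the two criteria next capture every $W$ for which $X_0\cap W$ or $\mathbb{N}\setminus(X_0\cup W)$ is infinite. The only survivors are the $W$ with $|W\triangle(\mathbb{N}\setminus X_0)|<\infty$, and one final application of the union criterion --- now that $F$ is enormous --- disposes of these, giving $f=\mathrm{id}$ and so the original map induced by a permutation.

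I expect the third step, the cross-component gluing, to be the main obstacle. The difficulty is structural: any two comparable vertices lie in different connected components, so inside a single component the inclusion order carries no information, and Theorem 1 by itself can neither distinguish the permutation-type from the $\ast$-type nor force the various componentwise permutations to coincide. All the real content therefore lies in exploiting inclusions that jump between components, and the delicate point is engineering the bootstrap so that it reaches the ``antipodal'' class of $\mathbb{N}\setminus X_0$, where comparabilities with the already-fixed vertices are scarcest.
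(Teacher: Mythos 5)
Your architecture is genuinely different from the paper's. The paper does not route the Corollary through Theorem 1 at all: it proves Theorem 2 directly and self-containedly (an order isomorphism respects arbitrary intersections of families of vertices; covers go to covers; this builds a bijection $s\colon X\to f(X)$ on each vertex $X$ governing all vertices below $X$, which is then globalized via the sets $[n]$ of vertices containing $n$), and the Corollary is then the one-line observation about $\ast f$. Your plan --- order-definability of covers and adjacency, then Theorem 1 on each component, then a cross-component bootstrap --- is a viable alternative, and two of its three steps are sound: step 1 is correct, and the ``sandwich principle'' of step 3, though only sketched, does carry through (I checked that the successive stages of your bootstrap --- the almost-equality class of $X_{0}$, then the almost-comparable vertices, then those meeting $X_{0}$ or avoiding $X_{0}\cup W$ in an infinite set, then the antipodal class --- can each be realized as unions/intersections of previously fixed vertices).

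Step 2, however, contains a genuine error. You assert that if $f=\ast\circ s$ on the component of $Y_{1},Y_{2}$, then $\mathbb{N}\setminus s(Y_{1}\cap Y_{2})=f(Y_{1}\cap Y_{2})$. This applies the formula $f=\ast\circ s$ to the vertex $Y_{1}\cap Y_{2}$, which does \emph{not} lie in that component: it is comparable with $Y_{1}$, and (as you yourself emphasize in your closing paragraph) comparable vertices always lie in distinct components. With the correct values the contradiction evaporates: cover preservation gives $f(Y_{1}\cap Y_{2})=f(Y_{1})\cap f(Y_{2})=\mathbb{N}\setminus s(Y_{1}\cup Y_{2})$ and $f(Y_{1}\cup Y_{2})=f(Y_{1})\cup f(Y_{2})=\mathbb{N}\setminus s(Y_{1}\cap Y_{2})$, and one checks that every inclusion inside the diamond $\{Y_{1}\cap Y_{2},\,Y_{1},\,Y_{2},\,Y_{1}\cup Y_{2}\}$ is then preserved; the diamond is symmetric under order reversal, so no such finite local configuration can distinguish type (A) from type (B). To kill type (B) you need an infinite family crossing components. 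For instance, every vertex of the star $St(Y_{1}\cap Y_{2})$ lies in $J(Y_{1})$, so order preservation together with $f=\ast\circ s$ on $J(Y_{1})$ would force
$$f(Y_{1}\cap Y_{2})\subset\bigcap_{Z\in St(Y_{1}\cap Y_{2})}\bigl(\mathbb{N}\setminus s(Z)\bigr)=\mathbb{N}\setminus s\Bigl(\bigcup_{Z\in St(Y_{1}\cap Y_{2})}Z\Bigr)=\mathbb{N}\setminus s(\mathbb{N})=\emptyset,$$
which is absurd since $f(Y_{1}\cap Y_{2})$ is an infinite set. With this repair (and with the bootstrap of step 3 written out in full) your proof goes through, but as written the decisive step ruling out the $\ast$-type alternative fails.
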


\subsection{Application: automorphisms of the infinite Kneser graph}
Recall that the {\it Kne\-ser graph} $K(n,k)$ and the Johnson graph $J(n,k)$ have the same vertex set; two vertices of $K(n,k)$ are adjacent if
they are disjoint subsets of $\{1,\dots,n\}$ (here we assume that $k<n-k$).
Every automorphism of $K(n,k)$ is induced by a permutation on $\{1,\dots,n\}$.
This follows from the Erd\H{o}s--Ko--Rado theorem;
see Section 7.8 in \cite{GR}.

Consider the {\it infinite Kneser graph} $K_{\infty}$ corresponding to the Johnson graph $J_{\infty}$.
The vertex set of this graph coincides with the vertex set of $J_{\infty}$ and
two vertices of $K_{\infty}$ are adjacent if they are disjoint subsets of $\mathbb{N}$.
This graph is a thin prototype of so-called {\it distant} graph defined for a vector space of dimension $\aleph_{0}$
\cite{BH}.
It is not difficult to prove that $K_{\infty}$ is a connected graph of diameter $3$.

\begin{cor}
Every automorphism of $K_{\infty}$ is induced by a permutation on $\mathbb{N}$.
\end{cor}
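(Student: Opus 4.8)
The plan is to deduce this corollary from Theorem~\ref{theorem2} by showing that every automorphism of $K_{\infty}$ is an order preserving bijective transformation of the common vertex set of $K_{\infty}$ and $J_{\infty}$ (partially ordered by inclusion). Since the two graphs share the same vertices, the phrase ``induced by a permutation on $\mathbb{N}$'' means the same thing for both: a permutation $s\in{\rm S}_{\infty}$ acts on subsets by $X\mapsto s(X)$, and this map is automatically an automorphism of $K_{\infty}$ because permutations preserve disjointness. Thus once an automorphism of $K_{\infty}$ is recognized as order preserving, Theorem~\ref{theorem2} identifies it, as a transformation of the vertex set, with the map induced by a permutation, which is exactly the desired conclusion.

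First I would describe the adjacency of $K_{\infty}$ in order-theoretic terms. For a vertex $X$ let $N(X)$ denote the set of all vertices adjacent to $X$ in $K_{\infty}$. Since two vertices are adjacent precisely when they are disjoint,
\[
N(X)=\{\,Y\in J_{\infty}: Y\subset {\mathbb N}\setminus X\,\}.
\]
The central step is the equivalence
\[
X\subset X'\;\Longleftrightarrow\; N(X')\subset N(X)\qquad (X,X'\in J_{\infty}).
\]
The implication from left to right is immediate, since a larger set is disjoint from fewer vertices. For the converse I would argue by contraposition: if $X\not\subset X'$, choose $n\in X\setminus X'$; because ${\mathbb N}\setminus X'$ is infinite and contains $n$, it contains an infinite and co-infinite subset $Y$ with $n\in Y$, and this $Y$ is a vertex of $J_{\infty}$ lying in $N(X')$ (as $Y\cap X'=\emptyset$) but not in $N(X)$ (as $n\in Y\cap X$), whence $N(X')\not\subset N(X)$.

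Finally, let $\varphi$ be an automorphism of $K_{\infty}$. Being a graph automorphism it is bijective and carries neighborhoods to neighborhoods, $\varphi(N(X))=N(\varphi(X))$; combining this with the equivalence above gives, for all $X,X'$,
\[
X\subset X'\Longleftrightarrow N(X')\subset N(X)\Longleftrightarrow N(\varphi(X'))\subset N(\varphi(X))\Longleftrightarrow \varphi(X)\subset\varphi(X').
\]
Hence $\varphi$ is order preserving, and Theorem~\ref{theorem2} completes the argument. I expect the main obstacle to be the backward direction of the displayed equivalence, namely the explicit construction of the witness vertex $Y$: it is exactly here that the infinite setting is used, the infinitude of ${\mathbb N}\setminus X'$ providing the room to choose $Y$ inside it while keeping $Y$ both infinite and co-infinite. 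In the finite Kneser graph this same recovery of inclusion from the disjointness relation is the nontrivial content supplied by the Erd\H{o}s--Ko--Rado theorem.
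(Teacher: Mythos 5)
Your proposal is correct and follows essentially the same route as the paper: characterize inclusion via reverse inclusion of $K_{\infty}$-neighborhoods, conclude that every automorphism of $K_{\infty}$ is order preserving, and invoke Theorem~\ref{theorem2}. The only difference is that you spell out the proof of the key equivalence $X\subset X'\Longleftrightarrow N(X')\subset N(X)$ (the paper states it without proof), and your witness construction for the backward direction is sound, since any infinite $Y\subset{\mathbb N}\setminus X'$ containing $n$ is automatically co-infinite because its complement contains the infinite set $X'$.
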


\begin{proof}
For every $X\in K_{\infty}$ denote by $X^{o}$ the set of all vertices of $K_{\infty}$ adjacent with $X$.
If $X,Y\in K_{\infty}$ then
$$X\subset Y\;\Longleftrightarrow\;Y^{o}\subset X^{o}.$$
This implies that every automorphism of $K_{\infty}$ is an order preserving transformation of the vertex
set of $K_{\infty}$. Since $K_{\infty}$ and $J_{\infty}$ have the same vertex set, Theorem 2 gives the claim.
\end{proof}

\section{Proof of Theorem \ref{theorem1}}
Let $A\in J_{\infty}$ and $f$ be the restriction of an automorphism of $J_{\infty}$ to
the connected component $J(A)$. Then $f(J(A))$ is a connected component of $J_{\infty}$.
It is clear that $f$ transfers maximal cliques of $J_{\infty}$ (stars and tops) contained in $J(A)$
to maximal cliques contained in $f(J(A))$.

\begin{lemma}\label{res1-lemma1}
One of the following possibilities is realized:
\begin{enumerate}
\item[{\rm (A)}] $f$ transfers stars to stars and tops to tops,
\item[{\rm (B)}] $f$ transfers stars to tops and tops to stars.
\end{enumerate}
\end{lemma}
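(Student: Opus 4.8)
The plan is to attach to each vertex a purely combinatorial invariant that tells stars and tops apart, and then to spread the resulting dichotomy over all of $J(A)$ using connectedness. First I would work locally at a fixed vertex $X\in J(A)$. Let $\mathcal S_X=\{St(X\setminus\{a\}):a\in X\}$ be the family of stars through $X$ and $\mathcal T_X=\{T(X\cup\{b\}):b\in{\mathbb N}\setminus X\}$ the family of tops through $X$; by the description of maximal cliques, every maximal clique containing $X$ lies in $\mathcal S_X\cup\mathcal T_X$. A short set-theoretic computation will show the key intersection pattern: two distinct members of $\mathcal S_X$ meet only in $X$, two distinct members of $\mathcal T_X$ meet only in $X$, but a star $St(X\setminus\{a\})$ and a top $T(X\cup\{b\})$ always share the two vertices $X$ and $(X\setminus\{a\})\cup\{b\}$, hence an edge of $J_\infty$.

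Consequently, if one joins two maximal cliques through $X$ exactly when they share an edge of $J_\infty$, the resulting graph $H_X$ on the cliques through $X$ is the complete bipartite graph with parts $\mathcal S_X$ and $\mathcal T_X$. I would then transport this by $f$: since $f$ is the restriction of an automorphism of $J_\infty$, it carries $X$ to $f(X)$, maximal cliques through $X$ bijectively to maximal cliques through $f(X)$, and the relation ``share an edge'' to itself, so it induces an isomorphism $H_X\to H_{f(X)}$. In a complete bipartite graph with both parts infinite the two parts are precisely the maximal independent sets, and an isomorphism preserves these; therefore $f$ sends the unordered pair $\{\mathcal S_X,\mathcal T_X\}$ onto $\{\mathcal S_{f(X)},\mathcal T_{f(X)}\}$. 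This will show that every vertex $X$ is of one of two \emph{types}: type $\mathrm A$, where stars through $X$ go to stars and tops through $X$ go to tops, or type $\mathrm B$, where the roles are interchanged.

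It then remains to propagate the type across the component. If $X,Y\in J(A)$ are adjacent, the star $M=St(X\cap Y)$ lies in $\mathcal S_X\cap\mathcal S_Y$, whereas $f(M)$ is a single maximal clique and hence a star or a top but not both. Were $X$ of type $\mathrm A$ and $Y$ of type $\mathrm B$, then $f(M)$ would be forced to be a star (reading off $X$) and also a top (reading off $Y$); this contradiction forces adjacent vertices to share a type. Since $J(A)$ is connected, a single type prevails throughout $J(A)$: type $\mathrm A$ gives possibility (A) of the lemma and type $\mathrm B$ gives possibility (B). I expect the only genuinely delicate step to be the local computation identifying $H_X$ as complete bipartite (in particular the verification that a star and a top through $X$ always meet in an edge); once that is in hand, the invariance of the bipartition and the connectedness argument are formal.
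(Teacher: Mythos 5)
Your proof is correct, and it rests on the same combinatorial bedrock as the paper's: two distinct stars (or two distinct tops) through a common vertex meet only in that vertex, while a star and a top through a common vertex meet in a second vertex as well, hence in an edge. What differs is the organization. The paper argues from a single seed: assuming some star $St(X)$ has a star as its image, it produces for each $Y$ adjacent to $X$ an auxiliary top $T(Z)$ with $X\cup Y\subset Z$ and $|Z\setminus(X\cup Y)|=1$; the two-point intersections $|St(X)\cap T(Z)|=|St(Y)\cap T(Z)|=2$ first force $f(T(Z))$ to be a top, then $f(St(Y))$ to be a star, and the conclusion is spread over $J(A)$ by converting a path $C_{0},C_{1},\dots,C_{i}$ of vertices into a path $X,\,C_{0}\cap C_{1},\dots,C_{i-1}\cap C_{i},\,Y$ of star centers. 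You instead localize the dichotomy at each vertex: the clique graph $H_{X}$ is complete bipartite with parts $\mathcal{S}_{X},\mathcal{T}_{X}$, an isomorphism $H_{X}\to H_{f(X)}$ preserves the unordered pair of parts because they are exactly the maximal independent sets, and a common star $St(X\cap Y)$ forces adjacent vertices to carry the same type, so connectedness of $J(A)$ finishes. Your per-vertex ``type'' formalism makes the dichotomy transparent and your propagation along edges of $J(A)$ is simpler than the paper's path-of-centers step; the price is a little extra apparatus, and two points you should write out explicitly: the verification that a star and a top through $X$ share the second vertex $(X\setminus\{a\})\cup\{b\}$ (you state it, correctly, but it is the crux of the bipartite structure), and the fact that no maximal clique is simultaneously a star and a top (immediate, since the union of the vertices of $St(U)$ is $\mathbb{N}$ while the union of the vertices of $T(W)$ is the co-infinite set $W$), without which ``a star or a top but not both'' and hence your contradiction step is unjustified.
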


\begin{proof}
We will use the following facts:
\begin{enumerate}
\item[$\bullet$] The intersection of two distinct stars $St(X)$ and $St(Y)$ is empty or contains
precisely one vertex; the second possibility is realized only in the case when $X,Y$ are adjacent vertices of $J_{\infty}$.
The same holds for the intersection of two distinct tops.
\item[$\bullet$] The intersection of a star $St(X)$ and a top $T(Y)$ is empty or consists of precisely two vertices;
the second possibility is realized only in the case when $X\subset Y$ and $|Y\setminus X|=2$.
\end{enumerate}
The proof is a direct verification.

Suppose that $J(A)$ contains a star $St(X)$, $X\in J_{\infty}$ such that $f(St(X))$ is a star.
Consider any $Y\in J_{\infty}$ adjacent with $X$. We choose $Z\in J_{\infty}$ satisfying
$$X\cup Y\subset Z\;\mbox{ and }\; |Z\setminus(X\cup Y)|=1.$$
Then
$$|St(X)\cap T(Z)|=|St(Y)\cap T(Z)|=2$$
and
$$|f(St(X))\cap f(T(Z))|=|f(St(Y))\cap f(T(Z))|=2.$$
Since $St(X)$ goes to a star, the latter equality guarantees that $f(T(Z))$ is a top
and $f(St(Y))$ is a star.

So, $f(St(Y))$ is a star for every $Y\in J_{\infty}$ adjacent with $X$.
Now consider an arbitrary  $Y\in J_{\infty}$ such that the star $St(Y)$ is contained in $J(A)$.
We take any
$$C_{0}\in St(X),\;\;C\in St(Y)$$
and consider a path
$$C_{0},\,C_{1},\,\dots,\,C_{i}=C$$
in $J(A)$ (a path joining $C_{0}$ and $C$ exists, since $J(A)$ is a connected component).
Then
$$X,\,C_{0}\cap C_{1},\,C_{1}\cap C_{2},\,\dots,\,C_{i-1}\cap C_{i},\,Y$$
is a path in $J_{\infty}$ (possible $X$ coincides with $C_{0}\cap C_{1}$ or $C_{i-1}\cap C_{i}$ coincides with $Y$).
It was established above that $St(C_{0}\cap C_{1})$ goes to a star.
Then, by the same arguments, the image of  $St(C_{1}\cap C_{2})$ is a star.
Step by step, we get that $f(St(Y))$ is a star.
Similarly, we establish that tops go to tops.

If $f$ transfers every star to a top then the same arguments show that tops go to stars.
\end{proof}

\begin{prop}\label{prop1}
In the case {\rm (A)}, $f$ is induced by a permutation on $\mathbb{N}$, i.e. there exists
$s\in {\rm S}_{\infty}$ such that
$$f(U)=s(U)\;\;\;\;\;\forall\;U\in J(A).$$
\end{prop}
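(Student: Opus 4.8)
The plan is to reconstruct the permutation \emph{locally} at each vertex and then prove that the local data are the same at every vertex. Fix $U\in J(A)$ and look at the maximal cliques through $U$. The stars through $U$ are exactly the $St(U\setminus\{a\})$ with $a\in U$, and the tops through $U$ are exactly the $T(U\cup\{b\})$ with $b\in\mathbb{N}\setminus U$; hence the maximal cliques through $U$ are in natural bijection with $\mathbb{N}$, the stars indexed by the elements of $U$ and the tops by the elements of $\mathbb{N}\setminus U$. Since we are in case {\rm (A)}, $f$ carries stars through $U$ to stars through $f(U)$ and tops through $U$ to tops through $f(U)$, bijectively. Comparing the two copies of $\mathbb{N}$ attached to $U$ and to $f(U)$, I obtain a bijection $\phi_{U}\colon\mathbb{N}\to\mathbb{N}$: for $a\in U$ let $\phi_{U}(a)$ be the element with $f(St(U\setminus\{a\}))=St(f(U)\setminus\{\phi_{U}(a)\})$, and for $a\notin U$ let $\phi_{U}(a)$ be the element with $f(T(U\cup\{a\}))=T(f(U)\cup\{\phi_{U}(a)\})$. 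By construction $\phi_{U}$ maps $U$ bijectively onto $f(U)$, so if $\phi_{U}$ were independent of $U$ the common value $s$ would be a permutation of $\mathbb{N}$ with $s(U)=f(U)$ for every $U$, which is exactly the assertion.

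Thus everything reduces to proving $\phi_{U}=\phi_{U'}$ for \emph{adjacent} $U,U'\in J(A)$; connectedness of $J(A)$ then gives $\phi_{U}\equiv s$. Write $W=U\cap U'$ and $Z=U\cup U'$, so $U=W\cup\{p\}$, $U'=W\cup\{q\}$, $Z=W\cup\{p,q\}$ with $p=U\setminus U'$ and $q=U'\setminus U$. The verification splits according to where $a$ lies: $a\in W$, $a\in\mathbb{N}\setminus Z$, $a=p$, or $a=q$. The mechanism is uniform: to evaluate $\phi_{U}(a)$ I exhibit a second vertex lying in the relevant clique through $U$, and since $f$ takes that clique to a single clique, $\phi_{U}(a)$ is read off as the one-element set-difference of the images of $U$ and of that auxiliary vertex. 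For $a=p$ the auxiliary vertex is $U'$ itself, which lies in $St(W)$, and dually for $a=q$ it is $U$ inside $T(Z)$; one gets directly that $\phi_{U}(p)=\phi_{U'}(p)$ is the unique element of $f(U)\setminus f(U')$ and that $\phi_{U}(q)=\phi_{U'}(q)$ is the unique element of $f(U')\setminus f(U)$. For $a\in W$ the auxiliary vertex is the common neighbour $Z\setminus\{a\}$, which lies in both $St(U\setminus\{a\})$ and $St(U'\setminus\{a\})$, and dually for $a\notin Z$ it is $W\cup\{a\}$, lying in both $T(U\cup\{a\})$ and $T(U'\cup\{a\})$.

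What forces the two values to coincide in the latter two cases is that the three relevant vertices form a triangle contained in a \emph{single} maximal clique. When $a\in W$ the vertices $U,U',Z\setminus\{a\}$ are the three vertices $Z\setminus\{q\},Z\setminus\{p\},Z\setminus\{a\}$ of the top $T(Z)$, so their images are three vertices $Z'\setminus\{\alpha\},Z'\setminus\{\beta\},Z'\setminus\{\gamma\}$ of the top $f(T(Z))=T(Z')$; reading off the set-differences gives $\phi_{U}(a)=\gamma=\phi_{U'}(a)$. The case $a\notin Z$ is dual, with $U,U',W\cup\{a\}$ forming a triangle in the star $St(W)$ whose image is again a star.

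I expect the main work, and the only point demanding genuine care, to be this bookkeeping: checking that each auxiliary vertex really lies in the named clique, that $\{U,U',\text{auxiliary}\}$ is contained in one star or one top, and that in case {\rm (A)} the image of that star (resp. top) is again a star (resp. top), so that the set-difference formula for $\phi_{U}(a)$ applies on both sides. Once the four cases are settled, $\phi_{U}$ is constant on the connected set $J(A)$, the common value $s$ is a permutation in ${\rm S}_{\infty}$, and $f(U)=s(U)$ holds for all $U\in J(A)$, as required.
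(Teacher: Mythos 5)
Your proof is correct, and it shares the paper's overall architecture --- attach to every vertex a permutation of $\mathbb{N}$, show adjacent vertices carry the same permutation, and finish by connectedness of $J(A)$ --- but both stages are implemented genuinely differently. The paper's local object (Lemma \ref{res1-lemma2}) is a permutation $s_{X}$ that computes $f$ on the entire closed neighbourhood $X^{\sim}$; constructing it requires normalizing $f(X)=X$ by composing with a permutation, quoting the classification of automorphisms of $J_{k}$ and $J^{k}$ from Subsection 2.2 to obtain $s$ on $X$ from the action on stars, and then a separate extension of $s$ to $\mathbb{N}\setminus X$ together with a well-definedness check. Your $\phi_{U}$ carries less information --- you only guarantee $\phi_{U}(U)=f(U)$, not that $\phi_{U}$ computes $f$ on the neighbours of $U$ --- but it is defined tautologically from the way $f$ permutes the maximal cliques through $U$ (stars indexed by elements of $U$, tops by elements of $\mathbb{N}\setminus U$), so you need no normalization, no appeal to Subsection 2.2, and no extension step. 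The gluing arguments differ accordingly: the paper proves $s_{X}=s_{Y}$ (Lemma \ref{res1-lemma3}) by evaluating both permutations at vertices of $X^{\sim}\cap Y^{\sim}=St(X\cap Y)\cup T(X\cup Y)$, whereas you prove $\phi_{U}=\phi_{U'}$ by placing $U$, $U'$ and one auxiliary vertex inside a single star or top and reading off set-differences of the three images; this works precisely because case (A) forces the image of that clique to be a clique of the same type, which pins down all three indices simultaneously. I checked your four cases ($a=p$, $a=q$, $a\in W$, $a\notin Z$): the auxiliary vertices do lie in the named cliques, each triangle does lie in one maximal clique, and the set-difference computations come out as claimed (e.g.\ for $a\in W$, with $f(U)=Z'\setminus\{\alpha\}$, $f(U')=Z'\setminus\{\beta\}$, $f(Z\setminus\{a\})=Z'\setminus\{\gamma\}$ one gets $\phi_{U}(a)=\gamma=\phi_{U'}(a)$), so the bookkeeping you defer is indeed routine; the only small point worth writing out is the surjectivity of $\phi_{U}$, which needs the observation that in case (A) the $f$-preimage of a star (top) through $f(U)$ is again a star (top). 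The trade-off: the paper's stronger local lemma makes the gluing a pointwise comparison of two permutations, while your weaker local datum is exactly what Proposition \ref{prop1} needs and yields a shorter, more self-contained argument.
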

Proposition \ref{prop1} will be proved in two steps --- Lemmas \ref{res1-lemma2} and \ref{res1-lemma3}.
In each of these lemmas, we assume that $f$ satisfies (A).

For every $X\in J_{\infty}$ we denote by $X^{\sim}$ the set consisting of $X$ and all vertices of $J_{\infty}$ adjacent with $X$.

\begin{lemma}\label{res1-lemma2}
For every $X\in J(A)$ the restriction of $f$ to $X^{\sim}$ is induced by a permutation on $\mathbb{N}$.
\end{lemma}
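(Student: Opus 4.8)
The plan is to exploit the fact that the elements of $\mathbb{N}$ are encoded by the maximal cliques through a fixed vertex, and that $f$ respects this encoding because of assumption (A). First I would record the local structure at $X$: the maximal cliques of $J_{\infty}$ containing $X$ are precisely the stars $St(X\setminus\{a\})$ with $a\in X$ and the tops $T(X\cup\{b\})$ with $b\in\mathbb{N}\setminus X$, so the elements of $X$ are in bijection with the stars through $X$ and the elements of $\mathbb{N}\setminus X$ are in bijection with the tops through $X$. Moreover every vertex of $X^{\sim}$ distinct from $X$ has the form $N_{a,b}:=(X\setminus\{a\})\cup\{b\}$ with $a\in X$, $b\in\mathbb{N}\setminus X$, and a direct computation using the second fact from Lemma \ref{res1-lemma1} shows
$$St(X\setminus\{a\})\cap T(X\cup\{b\})=\{X,\,N_{a,b}\}.$$
Thus $N_{a,b}$ is recovered as the unique vertex distinct from $X$ lying in the intersection of the star associated with $a$ and the top associated with $b$.

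Next I would transport this picture by $f$. Since $f$ satisfies (A), each star $St(X\setminus\{a\})$ goes to a star and each top $T(X\cup\{b\})$ goes to a top, and all of these images contain $f(X)$. Applying the same description of the maximal cliques through a vertex, now to $f(X)$, the images must be exactly the stars $St(f(X)\setminus\{a'\})$ with $a'\in f(X)$ and the tops $T(f(X)\cup\{b'\})$ with $b'\in\mathbb{N}\setminus f(X)$. As $f$ induces a bijection between the maximal cliques through $X$ and those through $f(X)$, I obtain a bijection $s_{1}\colon X\to f(X)$ determined by $f(St(X\setminus\{a\}))=St(f(X)\setminus\{s_{1}(a)\})$ and a bijection $s_{2}\colon\mathbb{N}\setminus X\to\mathbb{N}\setminus f(X)$ determined by $f(T(X\cup\{b\}))=T(f(X)\cup\{s_{2}(b)\})$.

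Then I would combine these. Applying $f$ to the intersection identity above and using that $f(X)$ together with $(f(X)\setminus\{s_{1}(a)\})\cup\{s_{2}(b)\}$ is precisely the intersection of the image star and the image top, I conclude
$$f(N_{a,b})=(f(X)\setminus\{s_{1}(a)\})\cup\{s_{2}(b)\}.$$
Since $X$ and $\mathbb{N}\setminus X$ partition $\mathbb{N}$, as do $f(X)$ and $\mathbb{N}\setminus f(X)$, the maps $s_{1}$ and $s_{2}$ glue to a single permutation $s\in{\rm S}_{\infty}$. A final check shows $s(X)=f(X)$ and $s(N_{a,b})=f(N_{a,b})$ for all $a,b$, so $s$ induces $f$ on all of $X^{\sim}$.

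The main obstacle will be the bookkeeping in the middle step: one must verify that the star-indexed bijection $s_{1}$ and the top-indexed bijection $s_{2}$ are genuinely induced by $f$ and that they fit together consistently, which rests entirely on the two-vertex intersection formula for a star and a top sharing $X$. Once that identity is in hand the remainder is formal. It is essential that this is where assumption (A) enters: if $f$ interchanged stars and tops, the roles of $X$ and $\mathbb{N}\setminus X$ would be swapped and one would instead recover the composition of $*$ with a permutation, which is the content of the parallel argument in case (B).
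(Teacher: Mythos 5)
Your proposal is correct, and while it shares the paper's overall strategy (extract a permutation from the action of $f$ on the maximal cliques through $X$), the execution of the second half is genuinely different. Both proofs begin identically: the stars contained in $X^{\sim}$ are exactly $St(X\setminus\{a\})$ with $a\in X$, and assumption (A) forces $f$ to carry them to the stars through $f(X)$, yielding a bijection from $X$ onto $f(X)$ (the paper first normalizes $f(X)=X$ by composing with a permutation and invokes Subsection 2.2 at this point; your direct indexing of stars by elements of $X$ does the same job more economically). The divergence is in how $\mathbb{N}\setminus X$ is handled. The paper never indexes the tops through $X$: for $n\in\mathbb{N}\setminus X$ it chooses a vertex $Y\in X^{\sim}$ with $Y\setminus X=\{n\}$, declares $s(n)$ to be the unique element of $f(Y)\setminus X$, and then must prove independence of the choice of $Y$ via an adjacency argument (two choices $Y,Z$ are adjacent, hence so are their images, whose intersections with $X$ are shown to differ, forcing the elements outside $X$ to coincide). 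You instead read off the bijection $s_{2}$ on $\mathbb{N}\setminus X$ from the action of $f$ on tops, where no well-definedness issue can arise, and then recover the vertex images from the two-point identity $St(X\setminus\{a\})\cap T(X\cup\{b\})=\{X,N_{a,b}\}$, which transports through $f$ because $f$ is injective on $J(A)$. Your route is more symmetric (stars encode $X$, tops encode its complement), avoids the normalization and the appeal to Subsection 2.2, and trades the paper's well-definedness check for the routine verification that $f$ induces a type-preserving bijection between the cliques through $X$ and those through $f(X)$; the paper's route reuses machinery already established in Section 2 and needs only the stars among the cliques through $X$. Both arguments are complete.
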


\begin{proof}
We can suppose that $f(X)=X$
(otherwise, we consider $tf$, where $t\in {\rm S}_{\infty}$ transfers $f(X)$ to $X$).
In this case, the restriction of $f$ to $X^{\sim}$ is a bijective transformation of $X^{\sim}$.

A star $St(U)$ is contained in $X^{\sim}$ if and only if
\begin{equation}\label{eq-res1-1}
U\subset X\;\mbox{ and }\; |X\setminus U|=1.
\end{equation}
Thus $f$ defines a permutation on the set of all $U$ satisfying \eqref{eq-res1-1}.
By Subsection 2.2, this permutation is induced by a certain permutation $s$ on $X$.

Now we extend $s:X\to X$ to a permutation on $\mathbb{N}$.
Let $n\in {\mathbb N}\setminus X$.
We choose $Y\in X^{\sim}$ containing $n$. Since $n\not\in X$, we have $X\ne Y$ and
$X,Y$ are adjacent. This means that $n$ is unique element of $Y\setminus X$.
Since $f(Y)$ and $f(X)=X$ are adjacent, $f(Y)\setminus X$ consists of precisely one element.
We denote this number by $s(n)$.

Show that our definition of $s(n)$ does not depend on $Y$.
Let us take any $Z\in X^{\sim}\setminus\{Y\}$ containing $n$.
Since $Y$ and $Z$ both are adjacent to $X$, we have
$$|X\setminus (X\cap Y)|=|X\setminus (X\cap Z)|=1.$$
If $X\cap Y$ coincides with $X\cap Z$ then $Y=Z$
(recall that $n$ belongs to both $Y,Z$ and $n\not\in X$).
Therefore, $X\cap Y$ and $X\cap Z$ are adjacent vertices of $J_{\infty}$.
The latter guarantees that $Y$ and $Z$ are adjacent.
Thus
\begin{equation}\label{eq-res1-2}
f(Y)=\{s(n)\}\cup (X\cap f(Y))\;\mbox{ and }\;f(Z)=\{n'\}\cup (X\cap f(Z))
\end{equation}
are adjacent (here $n'$ is unique element of $f(Z)\setminus X$).
Note that
\begin{equation}\label{eq-res1-3}
X\cap f(Y)\ne X\cap f(Z).
\end{equation}
Indeed, the equality
$$X\cap f(Y)=X\cap f(Z)$$ implies the existence of a star containing
$$f(X)=X,\,f(Y),\,f(Z);$$
however, there is no star containing $X,Y,Z$ (these vertices are contained in a top).
Since $f(Y)$ and $f(Z)$ are adjacent, \eqref{eq-res1-2} and \eqref{eq-res1-3} show that $s(n)=n'$.

It is clear that $s:{\mathbb N}\to {\mathbb N}$ is a permutation on ${\mathbb N}$
and $f(U)=s(U)$ for every $U\in X^{\sim}$.
\end{proof}

So, for every $X\in J(A)$ there is a permutation $s_{X}\in {\rm S}_{\infty}$ such that
$$f(U)=s_{X}(U)\;\;\;\;\;\forall\; U\in X^{\sim}.$$

\begin{lemma}\label{res1-lemma3}
If $X,Y\in J(A)$ are adjacent then $s_{X}=s_{Y}$.
\end{lemma}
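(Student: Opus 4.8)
The plan is to compare $s_X$ and $s_Y$ through the single permutation $s:=s_Y^{-1}s_X$ of $\mathbb N$ and to show that $s=\mathrm{id}$. The driving observation is that $s$ fixes (setwise) every vertex lying in $X^{\sim}\cap Y^{\sim}$: indeed, for any $U\in X^{\sim}\cap Y^{\sim}$ we have $s_X(U)=f(U)=s_Y(U)$, whence $s(U)=s_Y^{-1}(s_X(U))=U$. Thus the whole argument reduces to producing, inside $X^{\sim}\cap Y^{\sim}$, a supply of $s$-invariant sets rich enough to force $s$ to fix every natural number.

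Write $C=X\cap Y$, $X\setminus Y=\{a\}$, $Y\setminus X=\{b\}$, so that $X=C\cup\{a\}$, $Y=C\cup\{b\}$, and put $R=\mathbb N\setminus(X\cup Y)$ (an infinite set). First I would list explicit members of $X^{\sim}\cap Y^{\sim}$. The set $C$ lies in $T(X)\cap T(Y)$ and $X\cup Y=C\cup\{a,b\}$ lies in $St(X)\cap St(Y)$, so both belong to $X^{\sim}\cap Y^{\sim}$, as of course do $X$ and $Y$. A short case analysis of the vertices adjacent to both $X$ and $Y$ shows that they are exactly $V_{p}:=(C\setminus\{p\})\cup\{a,b\}$ for $p\in C$, and $W_{r}:=C\cup\{r\}$ for $r\in R$. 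All of these are therefore $s$-invariant.

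The key mechanism is then elementary: if $P,Q$ are $s$-invariant and $P\setminus Q$ is a singleton $\{n\}$, then
$$\{s(n)\}=s(P\setminus Q)=s(P)\setminus s(Q)=P\setminus Q=\{n\},$$
so $s(n)=n$. Applying this with suitable pairs from the list above fixes every element of $\mathbb N=C\cup\{a,b\}\cup R$: for $r\in R$ use $W_{r}\setminus C=\{r\}$; for $a$ use $X\setminus C=\{a\}$ and for $b$ use $Y\setminus C=\{b\}$; and for $c\in C$ use $(X\cup Y)\setminus V_{c}=\{c\}$. Hence $s$ is the identity, i.e. $s_X=s_Y$.

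The step I expect to be the main obstacle is the bookkeeping in the middle: verifying that the vertices adjacent to both $X$ and $Y$ are precisely the families $V_{p}$ and $W_{r}$, so that no element of $\mathbb N$ is left unreachable by a singleton set-difference. The elements of $C$ are the delicate ones, since they cannot be isolated by the easy comparisons that dispatch $a$, $b$, and $R$; they require pairing $X\cup Y$ against the perturbed vertices $V_{c}$.
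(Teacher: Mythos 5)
There is one genuine error in your argument, though it is repairable. You claim that $C=X\cap Y$ and $X\cup Y$ belong to $X^{\sim}\cap Y^{\sim}$ on the grounds that $C\in T(X)\cap T(Y)$ and $X\cup Y\in St(X)\cap St(Y)$. The memberships in the top and the star are true, but the inference is false: $X^{\sim}$ consists of $X$ together with the vertices \emph{adjacent} to $X$, and no element of $T(X)$ or $St(X)$ is adjacent to $X$ --- adjacency requires $|X\setminus Z|=|Z\setminus X|=1$, whereas $C\setminus X=\emptyset$ and $X\setminus (X\cup Y)=\emptyset$. So in fact $T(X)\cap X^{\sim}=St(X)\cap X^{\sim}=\emptyset$. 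Worse, $C$ and $X\cup Y$ are incident to $X$, hence they lie in connected components different from $J(A)$, so the defining property of $s_X$ and $s_Y$ (agreement with $f$ on $X^{\sim}$, resp.\ $Y^{\sim}$) says nothing about them. Consequently your ``driving observation'' does not give $s(C)=C$ or $s(X\cup Y)=X\cup Y$, and these two invariances carry the whole argument: you use them to fix $a$, $b$, every $r\in R$, and (via $X\cup Y$ against $V_c$) every $c\in C$.

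The repair is one line, and it is exactly the move the paper makes: since $s=s_Y^{-1}s_X$ is a bijection of $\mathbb N$ with $s(X)=X$ and $s(Y)=Y$ (these two memberships in $X^{\sim}\cap Y^{\sim}$ are correct), injectivity gives $s(C)=s(X)\cap s(Y)=C$ and $s(X\cup Y)=s(X)\cup s(Y)=X\cup Y$. With that patch your proof is complete and correct; your identification of the common neighbours of $X$ and $Y$ as the $V_p$ and $W_r$ is right, and the resulting argument is essentially the paper's proof in different notation. The paper normalizes $f(X)=X$, $f(Y)=Y$ by composing with a permutation and then compares $s_X$ and $s_Y$ directly on $X^{\sim}\cap Y^{\sim}=St(X\cap Y)\cup T(X\cup Y)$, whose elements are precisely your $W_r$ and $V_p$ (together with $X$ and $Y$); in particular, the paper's computation of $s_X(X\cap Y)=s_X(X)\cap s_X(Y)$ is the same intersection trick you need, and its vertex $W=\{n,m\}\cup[(X\cap Y)\setminus\{k\}]$ is your $V_k$. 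Your reformulation via the single permutation $s_Y^{-1}s_X$ buys nothing essential, but it does cleanly avoid the normalization step.
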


\begin{proof}
Suppose that
$$X=\{n\}\cup(X\cap Y)\;\mbox{ and }\;Y=\{m\}\cup(X\cap Y).$$
We can assume that $f(X)=X$ and $f(Y)=Y$. Indeed, in the general case we have
$$f(X)=\{n'\}\cup(f(X)\cap f(Y)),\;\;f(Y)=\{m'\}\cup(f(X)\cap f(Y))$$
and consider $tf$, where $t\in {\rm S}_{\infty}$ transfers $n',m'$ and $f(X)\cap f(Y)$ to $n,m$ and $X\cap Y$, respectively.

It is easy to see that
$$X^{\sim}\cap Y^{\sim}=St(X\cap Y)\cup T(X\cup Y).$$
We have
$$s_{X}(X\cap Y)=s_{X}(X)\cap s_{X}(Y)=f(X)\cap f(Y)=X\cap Y.$$
Similarly, we get
$$s_{Y}(X\cap Y)=X\cap Y.$$
Then $s_{X}(n)=s_{Y}(n)=n$ and $s_{X}(m)=s_{Y}(m)=m$.

Let $k\in {\mathbb N}\setminus(X\cap Y)$. Then
$$U:=\{k\}\cup(X\cap Y)\in St(X\cap Y)\subset X^{\sim}\cap Y^{\sim}$$
and
$$s_{X}(U)=\{s_{X}(k)\}\cup (X\cap Y),\;\;s_{Y}(U)=\{s_{Y}(k)\}\cup (X\cap Y).$$
The equality $$s_{X}(U)=f(U)=s_{Y}(U)$$ shows that $s_{X}(k)=s_{Y}(k)$.

Let $k\in X\cap Y$. Then
$$W:=\{n,m\}\cup[(X\cap Y)\setminus\{k\}]\in T(X\cup Y)\subset X^{\sim}\cap Y^{\sim}$$
and
$$s_{X}(W)=\{n,m\}\cup[(X\cap Y)\setminus\{s_{X}(k\})],\;\;s_{Y}(W)=\{n,m\}\cup[(X\cap Y)\setminus\{s_{Y}(k\})].$$
The equality $$s_{X}(W)=f(W)=s_{Y}(W)$$ implies that $s_{X}(k)=s_{Y}(k)$.
\end{proof}

By connectedness, Lemma \ref{res1-lemma3} guarantees that $s_{X}=s_{Y}$ for all $X,Y\in J(A)$.
Proposition \ref{prop1} is proved.

In the case (B), the mapping $*f$ transfers stars to stars and tops to tops;
hence it is induced by a permutation on $\mathbb{N}$.
Thus $f$ is the composition of $*$ and the mapping induced by a permutation on $\mathbb{N}$.

\section{Proof of Theorem \ref{theorem2}}
Let $f$ be an order preserving bijective transformation of the vertex set of $J_{\infty}$.

\begin{lemma}\label{res2-lemma1}
Let $\{X_{i}\}_{i\in I}$ be a family of vertices of $J_{\infty}$ {\rm (}possible infinite{\rm )}
such that
$$X:=\bigcap_{i\in I}X_{i}\in J_{\infty}$$
Then
$$f(X)=\bigcap_{i\in I}f(X_{i}).$$

\end{lemma}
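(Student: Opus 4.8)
The plan is to use that an order preserving bijection of the vertex set is an automorphism of the poset $(J_{\infty},\subset)$, so that its inverse $f^{-1}$ is order preserving as well and both transformations carry strict inclusions to strict inclusions (bijectivity rules out equality). The easy half of the claim is immediate from this: since $X\subset X_{i}$ for every $i\in I$, order preservation yields $f(X)\subset f(X_{i})$ for every $i$, and therefore $f(X)\subset\bigcap_{i\in I}f(X_{i})=:Z$.

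The real content is the reverse inclusion $Z\subset f(X)$, which I would establish by contradiction. Suppose there were $a\in Z\setminus f(X)$. Then $a\in{\mathbb N}\setminus f(X)$, so the set $V:=f(X)\cup\{a\}$ is again a vertex of $J_{\infty}$: it is infinite because it contains $f(X)$, and its complement ${\mathbb N}\setminus V=({\mathbb N}\setminus f(X))\setminus\{a\}$ is infinite because $f(X)\in J_{\infty}$. By the choice of $a$ we have $a\in f(X_{i})$ for every $i$, and combined with $f(X)\subset f(X_{i})$ this gives $V\subset f(X_{i})$ for all $i$. Now I apply the order preserving map $f^{-1}$ to these inclusions between genuine vertices: $f^{-1}(V)\subset X_{i}$ for every $i$, hence $f^{-1}(V)\subset\bigcap_{i\in I}X_{i}=X$. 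On the other hand $f(X)\subsetneq V$, so applying $f^{-1}$ once more gives $X\subsetneq f^{-1}(V)$, contradicting $f^{-1}(V)\subset X$. Thus $Z\setminus f(X)=\emptyset$ and $f(X)=Z$, as required.

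The point that needs care — and the reason the argument cannot be phrased purely in terms of infima — is that the set-theoretic intersection $Z=\bigcap_{i}f(X_{i})$ need not itself be a vertex of $J_{\infty}$ (its complement may be finite), so one is not entitled to apply $f^{-1}$ directly to $Z$. The device that circumvents this is to enlarge $f(X)$ by the single element $a$: adjoining one point to a vertex of $J_{\infty}$ keeps both the set and its complement infinite, producing an honest vertex $V$ with $f(X)\subsetneq V\subset Z$ to which $f^{-1}$ does apply. This localization to the one-step extension $V$ is the only delicate step; everything else is a formal consequence of $f$ being an order isomorphism.
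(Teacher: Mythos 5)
Your argument is correct step by step, but your closing claim --- that the intersection $Z=\bigcap_{i\in I}f(X_{i})$ ``need not itself be a vertex of $J_{\infty}$ (its complement may be finite)'' --- is false, and it is precisely this point that the paper's shorter proof exploits. Note first that $I\neq\emptyset$, since otherwise the intersection would be ${\mathbb N}\notin J_{\infty}$. Then $Z$ is sandwiched: it contains $f(X)$, hence is infinite, and it is contained in $f(X_{i_{0}})$ for any fixed $i_{0}\in I$, hence ${\mathbb N}\setminus Z\supset {\mathbb N}\setminus f(X_{i_{0}})$ is infinite. So $Z\in J_{\infty}$, and the paper simply applies $f^{-1}$ to $Z$ itself: from $Z\subset f(X_{i})$ for all $i$ one gets $f^{-1}(Z)\subset X$, hence $Z\subset f(X)$, which together with the easy inclusion $f(X)\subset Z$ gives equality. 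Your contradiction argument via the one-point extension $V=f(X)\cup\{a\}$ is valid --- $V$ is indeed a vertex, $V\subset f(X_{i})$ for all $i$ forces $f^{-1}(V)\subset X$, contradicting $X\subsetneq f^{-1}(V)$ --- so there is no gap; it is just a slightly longer route, taken to avoid an obstacle that is not actually there.
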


\begin{proof}
Since $f$ is order preserving, $f(X)$ is contained in every $f(X_{i})$ and we have
\begin{equation}\label{eq-res2-1}
f(X)\subset \bigcap_{i\in I}f(X_{i}).
\end{equation}
The inclusion
$$f(X)\subset \bigcap_{i\in I}f(X_{i}) \subset f(X_{i})$$
and the fact that $f(X),f(X_{i})$ are vertices of $J_{\infty}$ guarantee that
$$X':=\bigcap_{i\in I}f(X_{i})\in J_{\infty}.$$
The inverse mapping $f^{-1}$ is order preserving and
$f^{-1}(X')$ is contained in every $X_{i}$. Thus
$$f^{-1}(X')\subset X\;\mbox{ and }\;X'\subset f(X).$$
By \eqref{eq-res2-1}, $f(X)\subset X'$.
Therefore, $f(X)=X'$.
\end{proof}

\begin{lemma}\label{res2-lemma1'}
If $X,Y\in J_{\infty}$, $Y\subset X$ and $|X\setminus Y|=1$ then
$$|f(X)\setminus f(Y)|=1.$$
\end{lemma}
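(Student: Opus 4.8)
The plan is to read the hypothesis as a statement about the covering relation of the poset $(J_{\infty},\subset)$ and to exploit the fact that $f$, being an order preserving \emph{bijection}, is an order isomorphism. First I would record that the inverse $f^{-1}$ is also order preserving: the defining biconditional $X\subset Y\Leftrightarrow f(X)\subset f(Y)$ is symmetric in $f$ and $f^{-1}$, so both monotonicity directions are available, and moreover injectivity of $f$ turns strict inclusions into strict inclusions.

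Next I would characterize the covering relation. I claim that for $Y,X\in J_{\infty}$ with $Y\subset X$, the condition $|X\setminus Y|=1$ is equivalent to the nonexistence of a vertex $Z\in J_{\infty}$ with $Y\subsetneq Z\subsetneq X$. The implication from $|X\setminus Y|=1$ is immediate, since then $X=Y\cup\{a\}$ leaves no room for a proper intermediate subset of $\mathbb{N}$. For the converse I would argue by contraposition: if $|X\setminus Y|\ge 2$, choose $a\in X\setminus Y$ and set $Z:=Y\cup\{a\}$; because $X\setminus Y$ contains a further element, the inclusions $Y\subsetneq Z\subsetneq X$ are strict, and $Z$ is a vertex of $J_{\infty}$ since $|Z|=|Y|+1=\infty$ and $|\mathbb{N}\setminus Z|=|\mathbb{N}\setminus Y|-1=\infty$.

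With this in hand the lemma follows quickly. Since $Y\subsetneq X$ and $f$ is injective, we have $f(Y)\subsetneq f(X)$, whence $|f(X)\setminus f(Y)|\ge 1$. Suppose, for contradiction, that $|f(X)\setminus f(Y)|\ge 2$. Applying the characterization to the pair $f(Y)\subset f(X)$ produces a vertex $W\in J_{\infty}$ with $f(Y)\subsetneq W\subsetneq f(X)$. Since $f^{-1}$ is an order preserving bijection, $Y\subsetneq f^{-1}(W)\subsetneq X$ with $f^{-1}(W)\in J_{\infty}$, contradicting $|X\setminus Y|=1$ via the same characterization. Therefore $|f(X)\setminus f(Y)|=1$.

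I expect the only genuine content to be the verification in the second paragraph that the intermediate set $Z$ (and hence $W$) is an honest vertex of $J_{\infty}$; this is the point where the defining infinite-cardinality conditions $|Z|=|\mathbb{N}\setminus Z|=\infty$ are used, and it is routine once the candidate intermediate is written down. Everything else is the formal principle that an order isomorphism preserves covering relations, transported through $f$ and $f^{-1}$.
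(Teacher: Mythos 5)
Your proposal is correct and follows essentially the same route as the paper's proof: the paper also argues by contradiction, producing a vertex $Z\in J_{\infty}$ strictly between $f(Y)$ and $f(X)$ and pulling it back through the order preserving inverse $f^{-1}$ to contradict $|X\setminus Y|=1$. Your only addition is to spell out explicitly what the paper leaves implicit, namely that the intermediate set $f(Y)\cup\{a\}$ really is a vertex of $J_{\infty}$.
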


\begin{proof}
It is clear that $f(Y)$ is a proper subset of $f(X)$. If $|f(X)\setminus f(Y)|>1$ then there exists $Z\in J_{\infty}\setminus\{f(X),f(Y)\}$ such that
$$f(Y)\subset Z\subset f(X).$$
Then
$$Y\subset f^{-1}(Z)\subset X.$$
Since $|X\setminus Y|=1$, the latter inclusions mean that $f^{-1}(Z)$ coincides with $X$ or $Y$, a contradiction.
\end{proof}

\begin{lemma}\label{res2-lemma2}
For every $X\in J_{\infty}$ there exists a bijective mapping  $s:X\to f(X)$
such that
$f(Y)=s(Y)$ for every $Y\in J_{\infty}$ contained in $X$.
\end{lemma}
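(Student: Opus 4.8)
The plan is to extract the bijection $s$ from the way $f$ acts on the top $T(X)$, and then to propagate it from single deletions to arbitrary subsets of $X$ using Lemma \ref{res2-lemma1}.

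First I would define $s$ as follows. For each $x\in X$ set $X_{x}:=X\setminus\{x\}$; this is a vertex of $J_{\infty}$ belonging to $T(X)$. By Lemma \ref{res2-lemma1'} the set $f(X)\setminus f(X_{x})$ consists of exactly one element, which I would call $s(x)$, giving a map $s\colon X\to f(X)$. To check that $s$ is a bijection I would apply the same construction to the order preserving bijection $f^{-1}$ at the vertex $f(X)$: this yields a map $f(X)\to X$ that is inverse to $s$. (Injectivity of $s$ also follows at once, since $s(x)=s(x')$ forces $f(X_{x})=f(X_{x'})$ and hence $x=x'$.)

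The key step is to represent an arbitrary vertex $Y\subset X$ of $J_{\infty}$ as an intersection of deletions. I would use
$$Y=\bigcap_{x\in X\setminus Y}X_{x},$$
valid because an element of $X$ lies in every $X_{x}$ with $x\in X\setminus Y$ exactly when it lies in $Y$. All the $X_{x}$ and the intersection $Y$ are vertices of $J_{\infty}$, so Lemma \ref{res2-lemma1} applies and gives
$$f(Y)=\bigcap_{x\in X\setminus Y}f(X_{x})=\bigcap_{x\in X\setminus Y}\bigl(f(X)\setminus\{s(x)\}\bigr)=f(X)\setminus\{s(x):x\in X\setminus Y\}.$$
Since $s$ is a bijection of $X$ onto $f(X)$, this last set is precisely $s(Y)$, which is the desired identity; the case $Y=X$ is trivial, the index set being empty.

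The main obstacle is the honest use of Lemma \ref{res2-lemma1} in the infinite setting: the complement $X\setminus Y$ will in general be infinite, so $Y$ is genuinely an infinite intersection, and it is exactly the fact that $f$ commutes with such infinite intersections (for families whose intersection remains a vertex) that lets me pass from the single-element behaviour on $T(X)$ to all subsets $Y$. Once that lemma is in hand, the rest is the set-theoretic bookkeeping recorded above.
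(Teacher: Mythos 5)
Your proposal is correct and follows essentially the same route as the paper: both extract $s$ from the action of $f$ on the one-element deletions $X\setminus\{x\}$ (via Lemma \ref{res2-lemma1'}) and then extend to arbitrary $Y\subset X$ by writing $Y$ as an intersection of such deletions and invoking Lemma \ref{res2-lemma1}. The only cosmetic differences are that the paper first normalizes to $f(X)=X$ and cites Subsection 2.2 for the existence of $s$, whereas you construct $s$ and its inverse directly from $f$ and $f^{-1}$.
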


\begin{proof}
We restrict ourself to the case then $f(X)=X$
(in the general case, we consider the mapping $tf$ with $t\in {\rm S}_{\infty}$ transferring $f(X)$ to $X$).
Denote by ${\mathcal X}$ the set of all $Y\subset X$ satisfying
$|X\setminus Y|=1$.
All elements of ${\mathcal X}$ are vertices of $J_{\infty}$ and, by Lemma \ref{res2-lemma1'},
$f$ defines a permutation on ${\mathcal X}$.
By Subsection 2.2, this permutation is induced by a certain permutation $s$ on $X$, i.e.
$$f(Y)=s(Y)\;\;\;\;\;\forall\;Y\in {\mathcal X}.$$
Every $Y\in J_{\infty}$ contained in $X$
can be presented as the intersection of a family $\{Y_{i}\}_{i\in I}$ of elements from ${\mathcal X}$ (possible infinite).
Then
$$f(Y)=\bigcap_{i\in I}f(Y_{i})=\bigcap_{i\in I}s(Y_{i})=s(Y)$$
(the first equality follows from Lemma \ref{res2-lemma1}).
\end{proof}

For every $n\in {\mathbb N}$ denote by $[n]$ the set of all vertices of $J_{\infty}$ containing $n$.
\begin{lemma}\label{res2-lemma3}
For every $n\in {\mathbb N}$ there exists $s(n)\in {\mathbb N}$ such that
$$f([n])=[s(n)].$$
\end{lemma}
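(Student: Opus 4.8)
The plan is to build the global value $s(n)$ by gluing the local bijections furnished by Lemma \ref{res2-lemma2}. For each vertex $X$ that lemma gives a bijection $s_{X}\colon X\to f(X)$ with $f(Y)=s_{X}(Y)$ for every vertex $Y\subset X$. First I would isolate the compatibility statement that makes everything work: if $W\subset X$ is a vertex, then $s_{W}=s_{X}|_{W}$. To see this, fix $k\in W$ and choose a vertex $Y\subset W$ with $k\in Y$ such that $Y\setminus\{k\}$ is again a vertex; then $f(Y)=s_{X}(Y)$ and $f(Y\setminus\{k\})=s_{X}(Y\setminus\{k\})$, so $s_{X}(k)$ is the unique element of $f(Y)\setminus f(Y\setminus\{k\})$, and the same computation with $s_{W}$ gives $s_{W}(k)=s_{X}(k)$.

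Next I would prove that $s_{X}(n)$ does not depend on the choice of a vertex $X$ containing $n$. Given two such vertices $X,X'$, the compatibility statement reduces matters to locating a vertex that shares a sub-vertex with each of them. If $X\cap X'$ is infinite it is itself a vertex containing $n$, and $s_{X}(n)=s_{X\cap X'}(n)=s_{X'}(n)$. If $X\cap X'$ is finite, then $X\setminus X'$ and $X'\setminus X$ are both infinite, and I would take $Z=\{n\}\cup B_{0}\cup C_{0}$ with $B_{0}\subset X\setminus X'$ and $C_{0}\subset X'\setminus X$ each infinite and with infinite complement in its host; then $Z\cap X=\{n\}\cup B_{0}$ and $Z\cap X'=\{n\}\cup C_{0}$ are vertices containing $n$, whence $s_{X}(n)=s_{Z}(n)=s_{X'}(n)$. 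This lets me set $s(n):=s_{X}(n)$ for any vertex $X$ containing $n$.

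Finally I would establish $f([n])=[s(n)]$ by two inclusions. For the forward inclusion, any vertex $X\ni n$ satisfies $f(X)=s_{X}(X)\ni s_{X}(n)=s(n)$, so $f([n])\subset[s(n)]$. For the reverse inclusion I would run the same construction for the order preserving bijection $f^{-1}$, obtaining an $m'$ with $f^{-1}(W)\ni m'$ for every vertex $W$ containing $s(n)$; applying this to $W=f(X)$ for each vertex $X\ni n$ yields $m'\in\bigcap_{X\ni n}X=\{n\}$, so $m'=n$ and hence $[s(n)]\subset f([n])$.

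The main obstacle is the well-definedness of $s(n)$: two vertices containing $n$ need not meet in an infinite set, so a direct comparison of $s_{X}$ and $s_{X'}$ is unavailable, and the argument must be routed through an auxiliary vertex $Z$ whose intersections with $X$ and with $X'$ are themselves vertices. Everything else is a bookkeeping consequence of Lemma \ref{res2-lemma2} together with the order preservation of $f$ and of $f^{-1}$; Lemma \ref{res2-lemma1} is not needed here beyond its role inside the proof of Lemma \ref{res2-lemma2}.
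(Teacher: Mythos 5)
Your proof is correct, and it runs on the same engine as the paper's --- the local bijections $s_{X}$ supplied by Lemma \ref{res2-lemma2} --- but it glues them together differently. The paper never states your compatibility lemma $s_{W}=s_{X}|_{W}$. Instead it anchors $s(n)$ to a single configuration: it fixes $Y_{1},Y_{2}\in J_{\infty}$ with $Y_{1}\cap Y_{2}=\{n\}$ and $X:=Y_{1}\cup Y_{2}\in J_{\infty}$, defines $s(n)$ by $f(Y_{1})\cap f(Y_{2})=\{s(n)\}$ (using injectivity of the local bijection on $X$), and then verifies directly that $s(n)\in f(Z)$ for every $Z\in[n]$: this is immediate when $Z\cap X$ is infinite, while when $Z\cap X$ is finite the paper constructs $T$ with $n\in T\subset Z$ and $X\cup T\in J_{\infty}$, applies Lemma \ref{res2-lemma2} to the enlarged vertex $X\cup T$, and pushes the nonempty set $Y_{1}\cap Y_{2}\cap T$ through that bijection to get $s(n)\in f(T)\subset f(Z)$. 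So where you bridge two vertices with finite intersection by descending to common sub-vertices ($Z\cap X$ and $Z\cap X'$) and invoking compatibility twice, the paper bridges by ascending to a common super-vertex; the set-theoretic bookkeeping is comparable. What your route buys is an explicit uniqueness and coherence statement for the family $\{s_{X}\}$, which the paper never isolates and which is of independent interest; what the paper's route buys is economy, since it needs no compatibility lemma, only injectivity of one local bijection at a time. The endgames coincide: both apply the whole construction to the order preserving bijection $f^{-1}$, and your explicit appeal to $\bigcap_{X\in[n]}X=\{n\}$ is exactly what the paper's remark about ``applying the same arguments'' tacitly requires.
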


\begin{proof}
We take $Y_{1},Y_{2}\in J_{\infty}$ satisfying
$$Y_1\cup Y_{2}\in J_{\infty}\;\mbox{ and }\;Y_{1}\cap Y_{2}=\{n\}.$$
Let $X:=Y_{1}\cup Y_{2}$.
Lemma \ref{res2-lemma2} implies the existence of
a bijection $s:X\to f(X)$
such that
$f(Y)=s(Y)$ for every $Y\in J_{\infty}$ contained in $X$.
Then
$$f(Y_1)\cap f(Y_2)=s(Y_1)\cap s(Y_2)=s(Y_{1}\cap Y_{2})=\{s(n)\}.$$
We show that the number $s(n)$ is as required.

Let $Z\in [n]$. If $Z$ has an infinite intersection with $X$ then $Z\cap X$ is an element of $[n]$
contained in $X$ and
$$f(Z\cap X)=s(Z\cap X)$$
contains $s(n)$.
The inclusion
$$f(Z\cap X)\subset f(Z)$$
guarantees that $f(Z)\in [s(n)]$.

Suppose that $Z\cap X$ is finite.
In this case, we decompose $Z\setminus X$ in the disjoint union of two infinite subsets $A,B$
and define
$$T:=(Z\cap X)\cup A.$$
Then $T\in J_{\infty}$; moreover, $$n\in T\subset Z\;\mbox{ and }\; X':=X\cup T\in J_{\infty}.$$
By Lemma \ref{res2-lemma2}, there exists a bijection $s':X'\to f(X')$ such that
$f(Y)=s'(Y)$ for every $Y\in J_{\infty}$ contained in $X'$.
Since
$$Y_1\cap Y_2\cap T\ne \emptyset$$
(this intersection contains $n$),
we have
$$f(Y_1)\cap f(Y_2)\cap f(T)=s'(Y_1)\cap s'(Y_2)\cap s'(T)=s'(Y_1 \cap Y_2\cap T)\ne\emptyset.$$
On the other hand, $$f(Y_1)\cap f(Y_2)=\{s(n)\}.$$
Hence $f(T)$ contains $s(n)$
and the inclusion $f(T)\subset f(Z)$ implies that $f(Z)$ belongs to $[s(n)]$.

So, we obtain that $f([n])\subset [s(n)]$. Applying the same arguments to the transformation $f^{-1}$,
we get the inverse inclusion.
\end{proof}

The mapping $n\to s(n)$ is a permutation on ${\mathbb N}$
and $f$ is the automorphism of $J_{\infty}$ induced by
this permutation.


\begin{thebibliography}{999}
\bibitem{Baer}
Baer R., {\it Projective Geometry and Linear Algebra}, Academic Press, New York 1952.

\bibitem{BH} Blunck A., Havlicek H.,
{\it On bijections that preserve complementarity of subspaces},
Discrete Math. 301(2005), 46-–56.

\bibitem{BCN} Brouwer A.E., Cochen A.M., Neumaier A.,
{\it Distance-Regular Graphs}, Springer-Verlag, New York, 1989. 
\bibitem{Cam}
Cameron P.J., {\it Automorphisms of graphs}, Topics in Algebraic Graph Theory,
Encyclopedia of Mathematics and Its Applications 102, Cambridge University Press, 2005.

\bibitem{Chow} Chow W.L.,
{\it On the geometry of algebraic homogeneous spaces}, Ann. of Math.
50(1949), 32–-67.



\bibitem{Char1}
Fu T.-S., Huang T., {\it A unified approach to a characterization of Grassmann graphs and bilinear form graphs},
European J. Combin. 15(1994), 363–-373.

\bibitem{GR} Godsil C., Royle G., {\it Algebraic graph theory}, Graduate Texts in Math. 207, Springer-Verlag, New York, 2001.

\bibitem{Havlicek} Havlicek H.,  Private communication, 2003.

\bibitem{Char2}
Metsch K., {\it A characterization of Grassmann graphs}, European J. Combin. 16(1995), 639-–644.

\bibitem{Char3}
Numata M., {\it A characterization of Grassmann and Johnson graphs}, J. Combin. Theory, Ser. B 48(1990), 178-–190.


\bibitem{Pankov1}
Pankov M., {\it Grassmannians of classical buildings}, Algebra and
Discrete Math. Series 2, World Scientific, 2010.

\bibitem{Pankov2}
Pankov M., {\it Isometric embeddings of Johnson graphs in Grassmann graphs},
J. Algebraic Combin., accepted (see online first articles).
\end{thebibliography}
\end{document}